\apptocmd{\sloppy}{\hbadness 10000\relax}{}{}
\definecolor{bwgreen}{rgb}{0.183,1,0.5}
\definecolor{bwmagenta}{rgb}{0.7,0.0,0.1}
\definecolor{bwblue}{rgb}{0.317,0.161,1}
\DeclareFontFamily{OT1}{rsfs}{}
\DeclareFontShape{OT1}{rsfs}{n}{it}{<-> rsfs10}{}
\DeclareMathAlphabet{\mathscr}{OT1}{rsfs}{n}{it}
\DeclareFontFamily{OT1}{pzc}{}
\DeclareFontShape{OT1}{pzc}{n}{it}{<->s*[2.2]pzc}{}
\DeclareMathAlphabet{\mathpzc}{OT1}{pzc}{b}{sl}
\newcommand{\Rmnum}[1]{\expandafter\@slowromancap\romannumeral #1@}
\DeclareMathOperator{\Gal}{Gal}
\DeclareMathOperator{\sep}{sep}
\DeclareMathOperator{\gal}{gal}
\newcommand*{\R}{\ensuremath{\mathbf{R}}}   
\newcommand*{\Z}{\ensuremath{\mathbf{Z}}}               
\newcommand*{\Q}{\ensuremath{\mathbf{Q}}}
\newcommand*{\Kbar}{\overline{K}}    
\newcommand*{\C}{\mathbf{C}}
\newcommand*{\E}{\mathscr{E}}
\let\danishO\O
\let\O\tempO
\newcommand*{\e}{\ensuremath{\mathbf{E}}}
\renewcommand*{\a}{\ensuremath{\mathbf{A}}}
\newcommand*{\wt}[1]{\widetilde{#1}}
\numberwithin{equation}{section}
\theoremstyle{plain}
  \newtheorem{theorem}[equation]{Theorem}
  \newtheorem{proposition}[equation]{Proposition}
  \newtheorem{lemma}[equation]{Lemma}
  \newtheorem{corollary}[equation]{Corollary}
\theoremstyle{definition}
  \newtheorem{definition}[equation]{Definition}
\theoremstyle{remark}
  \newtheorem{example}[equation]{Example}
  \newtheorem{remark}[equation]{Remark}
\begin{document}
\title{A characterization of strictly APF extensions}

\author{Bryden Cais}
\address{University of Arizona, Tucson}
\curraddr{Department of Mathematics, 617 N. Santa Rita Ave., Tucson AZ. 85721}
\email{cais@math.arizona.edu}

\author{Christopher Davis}
\address{University of Copenhagen, Denmark}
\curraddr{Department of Mathematical Sciences,  Universitetsparken~5, 
DK-2100 K{\o}benhavn \danishO}
\email{davis@math.ku.dk}

\author{Jonathan Lubin}
\address{Brown University, Providence}
\curraddr{Department of Mathematics, Box 1917, Providence RI. 02912}
\email{lubinj@math.brown.edu}



\subjclass[2010]{Primary:  Secondary: }
\keywords{Ramification theory}
\date{\today}

\begin{abstract}
Let $K$ denote a finite extension of $\Q_p$.  
We give necessary and sufficient conditions for an infinite totally wildly ramified extension $L/K$ to be strictly APF in the sense of Fontaine-Wintenberger.  Our conditions are phrased in terms of the existence of a certain tower of intermediate subfields.  
These conditions are well-suited to producing examples of strictly APF extensions, and in particular, our main theorem  proves that the \emph{$\varphi$-iterate} extensions previously considered by the first two authors are strictly APF.
\end{abstract}

\maketitle

\section{Introduction}

Let $p$ be a prime and $K$ a finite extension of $\Q_p$ with residue field $k$
and valuation $v_K$ normalized so that $v_K(K^{\times})=\Z$.
Fix an algebraic closure $\Kbar$ of $K$, and for any subfield $E$ of $\Kbar$
containing $K$ write $G_E:=\Gal(\Kbar/E)$.
Recall \cite{Wintenberger} that an infinite, totally wildly ramified extension $L/K$ is said
to be {\em arithmetically profinite} (APF) if the upper numbering ramification groups
$G_K^uG_L$ are {\em open} in $G_K$ for all $u\ge 0$.  
The field of norms machinery of Fontaine--Wintenberger \cite{Wintenberger}
functorially associates to any such APF extension $L/K$ a complete, discretely valued
field $X_K(L)$ of equicharacteristic $p$ and residue field $k$
with the amazing property that the \'etale sites of $L$ and $X_K(L)$
are equivalent; in particular, one has a canonical isomorphism of topological
groups $G_L\simeq \Gal(X_K(L)^{\sep}/X_K(L))$ that is compatible
with the upper numbering ramification filtrations.  In certain special 
cases, this isomorphism plays a foundational role in Fontaine's theory
of $(\varphi,\Gamma)$-modules \cite{Fontaine90} and in the integral $p$-adic Hodge theory 
of Faltings \cite{Faltings}, Breuil \cite{BreuilNormes,BreuilIntegral}, and Kisin \cite{KisinCrystal},
and in general provides a key ingredient of Scholze's recent theory
of perfectoid spaces and tilting \cite{Scholze}.

A famous theorem of Sen \cite{Sen72} guarantees that any infinite, totally wildly ramified
{\em Galois} extension $L/K$ with $\Gal(L/K)$ a $p$-adic Lie group 
is strictly\footnote{The meaning of the {\em strictness} condition, whose definition (Definition~\ref{strAPF})
is somewhat technical, is that
the norm field $X_K(L)$ of $L/K$ admits a canonical embedding into the fraction field $\wt{\e}$
of Fontaine's ring $\wt{\e}^+:=\varprojlim_{x\mapsto x^p} \O_{\C_K}/(p)$;
see also Remark \ref{slope remark} for a geometric interpretation 
of strictness.} 
APF; however, there are many other interesting and important cases in which 
one is given an infinite and totally wildly ramified extension $L/K$,
and one would like to decide whether or not $L/K$ is strictly APF.
Such examples occur naturally in the theory of $p$-adic analytic dynamics as follows:
Choosing a uniformizer $\pi_1$ of $K$,
let $\varphi\in \O_K[\![x]\!]$ be a power series which reduces modulo $\pi_1$ to some power of
the Frobenius endomorphism of $k[\![x]\!]$ and which fixes zero, and let $\{\pi_n\}_{n\ge 1}$
be a compatible system (i.e., $\varphi(\pi_{n})=\pi_{n-1}$) of choices of roots
of $\varphi^{(n)}-\pi_1$.  The arithmetic of the rising union
$L:=\cup_{n\ge 1} K(\pi_n)$ is of serious interest
(e.g., \cite{LubinDynamics}).  For example, if $G$ is a Lubin--Tate formal group
over (the valuation ring of) a subfield $F$ of $K$ and $\varphi$
is the power series giving multiplication by a uniformizer of $F$,
then one may choose $\{\pi_n\}_{n\ge 1}$ so that $L/K$ is the Lubin--Tate extension
generated by the $p$-power torsion points of $G$ in $\Kbar$.
While it is true that $L/K$ is strictly APF if its Galois closure $L^{\gal}/K$
is \cite[Proposition~1.2.3(iii)]{Wintenberger}, it is often very difficult or impossible in practice
to describe $\Gal(L^{\gal}/K)$, and so Sen's theorem is of limited use in these cases.

In this note, we establish the following elementary and explicit characterization 
of strictly APF extensions:

\begin{theorem} \label{main theorem}
Let $L/K$ be an infinite, totally wildly ramified extension.
Then $L/K$ is strictly APF if and only if there exists a tower of finite
extensions $\{E_n\}_{n\ge 2}$ of $E_1 := K$ inside $L$
with $L=\cup E_n$ and a norm compatible sequence $\{\pi_n\}_{n\ge 1}$ with $\pi_{n}$ a uniformizer of $E_n$
such that: 
\begin{enumerate}
	\item The degrees $q_n := [E_{n+1}:E_n]$ are bounded above. \label{bdd}
	\item If $f_n(x)  = x^{q_n} + a_{n,q_n-1}x^{q_n-1} + \cdots + a_{n,1} x + (-1)^p \pi_{n}\in E_{n}[x]$ 
is the minimal polynomial of $\pi_{n+1}$ over $E_{n}$,
	then the non-constant and non-leading coefficients $a_{n,i}$ of $f_n$ satisfy $v_K(a_{n,i}) > \epsilon$ for some $\epsilon > 0$, independent of $n$ and $i$. \label{coef}
\end{enumerate}
Moreover, if $L/K$ is strictly APF, one may take $\{E_n\}$ to be the tower of
elementary subextensions {\rm (}see Definition~\ref{elem subfield def}{\rm)} and $\{\pi_n\}$ to be any norm-compatible sequence of uniformizers.
\end{theorem}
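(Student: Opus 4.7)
The plan is to prove the two implications of the equivalence separately, treating the ``moreover'' clause as part of the necessity direction by working with a specific choice of tower.

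\textbf{Necessity.} Suppose $L/K$ is strictly APF and take $\{E_n\}$ to be the tower of elementary subextensions, with $\{\pi_n\}$ any norm-compatible sequence of uniformizers. Property (1) should follow essentially from unraveling the definition of elementary subfields, which are built directly from the breaks of the upper numbering ramification filtration of $L/K$: APF forces these breaks to be discrete and strictness controls their spacing, bounding the degrees $q_n = [E_{n+1}:E_n]$. Property (2) is the heart of the matter. Since $f_n$ is Eisenstein, the value $v_{E_{n+1}}(f_n'(\pi_{n+1}))$ computes the different $\mathfrak{d}_{E_{n+1}/E_n}$, which in turn determines the unique lower-numbering break of $E_{n+1}/E_n$. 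Via Herbrand's transition functions, strictness translates into an upper bound on this break, and then a calculation with the Eisenstein polynomial shows that no individual term in $f_n'(\pi_{n+1})$ can have too small valuation, yielding the uniform lower bound $v_K(a_{n,i}) > \epsilon$.

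\textbf{Sufficiency.} Given a tower $\{E_n\}$ and uniformizers $\{\pi_n\}$ satisfying (1) and (2), the condition $v_K(a_{n,i}) > \epsilon > 0$ forces $f_n$ to be Eisenstein, so $E_{n+1}/E_n$ is totally ramified of degree $q_n$. The uniform lower bound on the coefficient valuations provides a uniform upper bound on $v_{E_{n+1}}(f_n'(\pi_{n+1}))$, and hence on the lower numbering break of $E_{n+1}/E_n$. Combining this with the uniform bound $q_n \le Q$ from (1) and iterating Herbrand's theorem up the tower, one obtains a quantitative growth estimate for $\psi_{L/K}(u)/u$ that implies strict APF. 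The fact that $L = \bigcup E_n$ ensures the estimate is not lost by passing through the elementary subextensions, since any refinement of towers preserves upper-numbering ramification data.

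\textbf{Main obstacle.} The delicate point is the behavior of Herbrand composition: naive estimates from individual Eisenstein steps typically degrade when composed, so to conclude strictness one needs to exploit the \emph{uniformity} of $\epsilon$ and the uniformity of the bound on $q_n$ simultaneously. I expect the cleanest route is to compare the given tower $\{E_n\}$ with the canonical elementary tower, showing that the hypotheses on the coefficients of $f_n$ pass between the two towers up to a bounded discrepancy. A secondary technical point is handling the case where the elementary subfield index does not increase at every step of $\{E_n\}$: one must allow for trivial extensions or common refinements, and verify that the bound $\epsilon$ is insensitive to such reindexing.
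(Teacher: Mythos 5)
Your outline has the key inequalities pointing the wrong way, and this is not a cosmetic slip: it breaks both directions. Strictness is a \emph{lower} bound on the breaks, $i_n\ge c\,[K_{n+1}:K]$, i.e.\ the Newton polygon of $g_n(x):=f_n(x+\pi_{n+1})$ is \emph{steep}; an upper bound on the break (equivalently on $v_{E_{n+1}}(f_n'(\pi_{n+1}))$) holds automatically in mixed characteristic because of the leading term $q_n\pi_{n+1}^{q_n-1}$, and it is perfectly compatible with tiny coefficient valuations, so it can yield neither Condition~(2) nor strictness. Moreover, even after fixing the direction, the different alone is too weak for the necessity of~(2): from $v_K(f_n'(\pi_{n+1}))=(q_n-1)i_n/e_{E_{n+1}/K}\ge (q_n-1)c$ and the fact that the terms $i\,a_{n,i}\pi_{n+1}^{i-1}$ have pairwise distinct valuations you only get $v_K(a_{n,i})\ge (q_n-1)c-v_K(i)-(i-1)/e_{E_{n+1}/K}$, which is vacuous when $p\mid i$ and $v_K(p)$ is large compared with $(q_n-1)c$. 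The actual argument must use the \emph{whole} translated polynomial: since $K_{n+1}/K_n$ is elementary, the Newton polygon of $g_n$ is a single segment through $(q_n,0)$ of slope $\le -c$, so $v_K(b_{n,i})\ge c$ for all $0<i<q_n$, and the no-cancellation identity $v_K(b_{n,i})=\min_{j\ge i}v_K\bigl(a_{n,j}\binom{j}{i}\pi_{n+1}^{j-i}\bigr)\le v_K(a_{n,i})$ transfers this to the $a_{n,i}$. Likewise, Condition~(1) does \emph{not} follow ``from unraveling the definition of elementary subfields'': strictness by itself never bounds $q_n$ (the paper's Example~\ref{char p example} gives a strictly APF extension in equal characteristic whose elementary tower has unbounded degrees). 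One needs the mixed-characteristic input $v_K(q_n)=v_K(p)\log_p q_n$: the single-segment polygon forces $c\le v_K(b_{n,1})/(q_n-1)\le\bigl(v_K(p)\log_p q_n+(q_n-1)/e_{E_{n+1}/K}\bigr)/(q_n-1)$, which is what bounds $q_n$.

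On sufficiency, the step you flag as the ``main obstacle'' --- comparing the given tower with the elementary tower up to bounded discrepancy and iterating Herbrand --- is precisely what your sketch leaves unproved, and it is avoidable. The paper works directly with $\Phi_n:=\phi_{E_n/K}$ for the \emph{given} tower: the bound $v_K(a_{n,i})>\epsilon$ gives $v_K(b_{n,i})>\epsilon$ for $0<i<q_n$, so the first vertex of the copolygon of $g_n$ has $x$-coordinate $\ge \epsilon/q$ with $q:=\max q_n$; hence $\Phi_{n+1}=\Phi_n$ on $[0,e_{E_{n+1}/K}\,\epsilon/q]$, any new vertex $v$ of $\Phi_{n+1}$ satisfies $v\ge e_{E_{n+1}/K}\,\epsilon/q$ while the slope to its right is $\ge e_{E_{n+1}/K}^{-1}$, so by induction $v\,m_v\ge \epsilon/q$ for every vertex, and a comparison lemma ($c(L/K)\ge\liminf_n\min_{v}v\,m_v$) gives strictness without ever identifying the elementary tower. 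Note also that before invoking $c(L/K)$ you must prove $L/K$ is APF at all (in particular that the upper-numbering breaks $b_n$ are unbounded, which requires the slope estimate $\Phi'(x)\ge 1/e_{E_i/K}$ between consecutive stabilization points); your proposal presupposes $\psi_{L/K}$ is defined and skips this step.
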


As a consequence of our work, we are able
to produce many concrete examples of (typically non-Galois) strictly APF extensions
as follows:
	let $\pi_1$ be a uniformizer of $E_1:=K$; for $n\ge 1$ and given $E_n$ and $\pi_n \in E_n$ a uniformizer,
	choose a monic polynomial $\varphi_n(x) \in \O_{E_n}[x]$ satisfying $\varphi_n(0) = 0$ and $\varphi_n(x) \equiv x^{q_n} \bmod \pi_{n} \O_{E_n}$ for $q_n > 1$ a power of $p$, and let $\pi_n$ be a choice of root of 
	$f_n(x):=\varphi_n(x)-\pi_{n-1} = 0$.   If the degrees $q_n$ are bounded above and the non-leading and non-constant coefficients
	of the $f_n$ have $v_K$-valuation bounded below, then
	it follows at once from Theorem \ref{main theorem} that
 $L := \cup_{n} E_n$ is a strictly APF extension of $K$.
 In particular, the ``$\varphi$-iterate" extensions 
described above are always strictly APF.


In \S\ref{examples section}, we provide several examples of infinite, totally ramified
extensions $L/K$ which are not APF, or which are APF but not strictly
APF, to illustrate the subtlety of these conditions.  

As any strictly APF extension $L/K$ has norm field $X_K(L)$
that is canonically identified with a subfield of Fontaine's field $\wt{\e}$, 
one can try to find a canonical and functorial
lift of $X_K(L)$ to a subring of $\wt{\a}:=W(\wt{\e})$.
Such lifts play a crucial role in Fontaine's classification \cite{Fontaine90}
of $p$-adic representations of $G_L$ by \'etale $\varphi$-modules.
The question of functorially lifting $X_K(L)$
inside $\wt{\a}$ is studied in \cite{CaisDavis} and \cite{Berger},
and the main theorem of the present paper provides essential input for \cite{CaisDavis}.

\begin{remark} \label{char p remark}
Much of Theorem~\ref{main theorem} continues to hold if we allow $K$ to be an \emph{equicharacteristic} local field.  In particular, for $\{E_n\}$ satisfying Conditions~(\ref{bdd}) and (\ref{coef}), the field $L := \cup E_n$ is a strictly APF extension of $K$.  Conversely, for $L/K$ infinite totally wildly ramified and strictly APF and for $\{E_n\}$ the tower of elementary subextensions and $\{\pi_n\}$ a norm compatible sequence of uniformizers, Condition~(\ref{coef}) continues to hold.   (The proofs given below in the mixed characteristic case work in the equal characteristic case as well.)  However, Example~\ref{char p example} below shows we cannot expect Condition~(\ref{bdd}) to hold in general.  
\end{remark}

\begin{remark}
The proofs below produce an explicit lower bound for the constant $c(L/K)$ appearing in the definition of strictly APF (Definition~\ref{strAPF}).  The lower bound is given in terms of $\max q_n$ and $\epsilon$ as in Theorem~\ref{main theorem}.
\end{remark}

\section{Transition functions and ramification} \label{transition section}

Following \cite[\S2]{Lubin}, we briefly review the definition and properties of the Herbrand transition 
functions, and recall the definitions of APF and strictly APF as in \cite[1.2]{Wintenberger}.

Let $L/K$ be a finite, totally ramified extension contained in $\Kbar$,
and $\pi_L$ a uniformizer of $L$.
Write $v_L$ for the valuation of $\Kbar$, normalized so that $v_L(\pi_L)=1$.
Denote by $G$ the Galois set of all $K$-embeddings of $L$ into $\Kbar$,
and for real $t\ge 0$ set 
$$
G_t:=\{\sigma\in G\ :\ v_L(\sigma(\pi_L)-\pi_L)\ge t\}.
$$
We define the transition function
\begin{equation*}
	\phi_{L/K}(u):=\frac{1}{[L:K]}\int_{0}^u |G_t| \, dt;
\end{equation*}
it is a continuous, piecewise linear and increasing bijection on $[0,\infty)$,
so we may define $\psi_{L/K}:=\phi_{L/K}^{-1}$, which
is again continuous, piecewise linear and increasing.  For $L'/L$ any finite, totally ramified
extension contained in $\Kbar$, one has the transitivity relations
\begin{equation}
	\phi_{L'/K} = \phi_{L/K}\circ \phi_{L'/L}\qquad\text{and}\qquad
	\psi_{L'/K} = \psi_{L'/L} \circ \psi_{L/K}.\label{transitivity}
\end{equation}
In practice, we may compute $\phi_{L'/L}$ as follows:

\begin{proposition}[{\cite[Lemma~1]{Lubin}}] \label{copoly and phi}
	Let $L'/L$ be a finite, totally ramified extension of subfields of $\Kbar$
	containing $K$.  Choose a uniformizer $\pi_{L'}$ of $L'$ and let $f(x)\in L[x]$ be the minimal polynomial
	of $\pi_{L'}$ over $L$.  Set $g(x):=f(x+\pi_{L'})\in L'[x]$, and let
	$\Psi_g$ be the function whose graph is the boundary of the Newton copolygon of $g(x)=\sum_{n\ge 1} b_nx^n$
	formed by the intersection of the half-planes $\{y\le ix + v_K(b_i)\}_{i\ge 1}$.  Then 
	\begin{equation}
		\phi_{L'/L}(x) = e_{L/K}\Psi_g(x/e_{L'/K}).
	\end{equation}
\end{proposition}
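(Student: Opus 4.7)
The plan has three steps: interpret $|G_t|$ as a count of roots of $g$ of sufficiently large valuation, rewrite $\phi_{L'/L}$ as an integral of this count, and recognize the resulting integral as the copolygon $\Psi_g$ (after rescaling).

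For the first step, since $L'/L$ is separable and totally ramified, $f$ has distinct roots $\sigma(\pi_{L'})$ for $\sigma$ ranging over the set $G$ of $L$-embeddings $L'\hookrightarrow\Kbar$.  Hence the roots of $g(x)=f(x+\pi_{L'})$ are the $[L':L]$ elements $\sigma(\pi_{L'})-\pi_{L'}$, with the identity embedding contributing the simple root $0$ and the remaining roots nonzero of positive $v_K$-valuation.  Defining
\[
N(s) := \#\{\alpha\in\Kbar : g(\alpha)=0,\ v_K(\alpha)\ge s\}
\]
(with the convention $v_K(0):=+\infty$), the identity $v_{L'}=e_{L'/K}\cdot v_K|_{\Kbar}$ gives $|G_t|=N(t/e_{L'/K})$.

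For the second step, substituting into the definition of $\phi_{L'/L}$, applying the change of variable $s=t/e_{L'/K}$, and using $[L':L]=e_{L'/K}/e_{L/K}$ yields
\[
\phi_{L'/L}(u)=e_{L/K}\int_0^{u/e_{L'/K}} N(s)\,ds,
\]
reducing the proposition to the identity $\Psi_g(v)=\int_0^v N(s)\,ds$ for all $v\ge 0$.

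For the third step, both sides are concave, piecewise-linear functions of $v$ that vanish at $v=0$ (for $\Psi_g$, because $\Psi_g(0)=\min_i v_K(b_i)=v_K(b_d)=0$ since $g$ is monic), so it suffices to check that their slopes agree.  This is the classical Legendre duality between a Newton polygon and its copolygon: the slopes of the Newton polygon of $g$, counted with horizontal multiplicity, are the negatives $-v_K(\alpha)$ of the valuations of the nonzero roots of $g$.  Dually, the slope of $\Psi_g$ on a maximal linear piece containing $v$ equals the largest index $i$ achieving the minimum in $\min_i(iv+v_K(b_i))$, and one checks directly that this equals the number of roots $\alpha$ (including $\alpha=0$) with $v_K(\alpha)\ge v$, namely $N(v)$.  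The main point requiring care is this unwinding of the Legendre transform, but it can be verified concretely by identifying the breakpoints on both sides at each distinct positive valuation $s_j$ of a root of $g$, with slope drop at $s_j$ equal to the multiplicity of roots having that valuation.
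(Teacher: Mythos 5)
Your proposal is correct: the identification $|G_t|=N(t/e_{L'/K})$, the change of variables reducing the claim to $\Psi_g(v)=\int_0^v N(s)\,ds$, and the Newton polygon/copolygon duality (minimizing index $=$ number of roots of $g$ of valuation $\ge v$, with $\Psi_g(0)=0$ by monicity) all check out. The paper itself gives no proof of this proposition---it is imported as Lemma~1 of the cited work of Lubin---and your argument is essentially the standard one underlying that lemma, so there is nothing to compare beyond noting the agreement.
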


If $L/K$ is finite Galois, then the $G_t$ are the usual
lower-numbering ramification subgroups of $G$, and we define
the ramification subgroups in the {\em upper-numbering}
to be $G^t:=G_{\psi_{L/K}(t)}$.  Unlike the lower-numbering
groups, the $G^t$ are well-behaved with respect to quotients:
if $K'$ is a finite Galois extension of $K$ contained in $L$
then for $H:=\Gal(L/K') \mathrel{\unlhd} G$ one has $(G/H)^t = G^tH/H$
for all real $t\ge 0$.  It follows that by taking projective limits,
we may define the upper numbering filtration $\{G^t\}_{t\ge 0}$
for {\em any} Galois extension $L/K$, finite or infinite, contained
in $\Kbar$; this is a separated and exhaustive 
decreasing filtration of $G$ by closed normal subgroups.

\begin{remark}\label{Lshift}
	Because of our desire to have the simple description of $\phi_{L'/L}$
	given in Proposition \ref{copoly and phi}, our transition functions differ from the
	ones considered by Serre \cite{Serre79} and Wintenberger \cite{Wintenberger}
	by a shift.  Indeed, following \cite[\S2]{Lubin}, if $\prescript{}{S}{\phi}_{L'/L}$
	and $\prescript{}{S}{\psi}_{L'/L}$ denote the transition functions
	defined by Serre \cite[\Rmnum{4} \S3]{Serre79}, then one has the relations
	\begin{equation*}
		\phi_{L'/L}(x)=1+\prescript{}{S}{\phi}_{L'/L}(x-1)\qquad\text{and}\qquad
		\psi_{L'/L}(x)=1+\prescript{}{S}{\psi}_{L'/L}(x-1).
	\end{equation*}
	Correspondingly, the relation between our ramification groups
	$G_t$ and $G^t$
	and those defined by Serre $\prescript{}{S}{G}_t$,
	$\prescript{}{S}{G}^t$ is a through shift of one:
$G_t = \prescript{}{S}{G}_{t-1}$ and
		$G^t = \prescript{}{S}{G}^{t-1}$.  
\end{remark}

For any extension $E$ of $K$ contained in $\Kbar$, we 
define
\begin{equation}
i(E/K):=\sup_{t\ge 0}\{t\ :\ G_K^tG_E=G_K\}.\label{idef}
\end{equation}

\begin{definition}\label{APFDEF}
	Let $L/K$ be an arbitrary $($possibly infinite$)$ totally ramified
	extension of $K$ contained in $\Kbar$.  
	We say that $L/K$ is {\em arithmetically profinite} $($APF$)$
	if $G_K^u G_L$ is open in $G_K$ for all $u\ge 0$.  If $L/K$ is APF,
	we define
	\begin{equation}
		\psi_{L/K}(u):= \int_0^u [G_K: G_K^vG_L] \, dv,\label{int}
	\end{equation}
	which is a continuous and piecewise linear increasing bijection on $[0,\infty)$,
	and we write $\phi_{L/K}:=\psi_{L/K}^{-1}$. 
\end{definition}

Observe that any finite totally ramified extension $L/K$ is APF, 
and the functions
$\phi_{L/K}$ and $\psi_{L/K}$ of Definition \ref{APFDEF} coincide
with the previously defined transition functions of the same name.
It follows from the definition that if $L/K$ is an infinite APF extension, 
then the set of ramification breaks $\{b\in \R_{\ge 0}\ :\ G_K^{b+\varepsilon}G_L\neq G_K^bG_L\ \forall\,\varepsilon>0\}$
is {\em discrete} and unbounded, so we may enumerate these real numbers as $b_1 < b_2 < \ldots$.
\begin{definition} \label{elem subfield def}
	The {\em $n$-th elementary subextension of $L/K$} is the subfield $K_n$ of $\Kbar$ fixed by $G_K^{b_n}G_L$.
\end{definition}
We note that each $K_n$ is a finite extension of $K$ contained in $L$, that $L$
is the rising union of the $K_n$, and that $K_{n+1}/K_n$
is {\em elementary of level $i_n$} for $i_n:=i(K_{n+1}/K_n)=i(L/K_n)$
in the sense that there is a unique break at $u=i_n$ in the filtration
$\{G_{K_n}^uG_{K_{n+1}}\}_{u\ge 0}$ of $G_{K_n}$. Equivalently, the transition function $\phi_{K_{n+1}/K_n}$
is the boundary function of the intersection of the two half-planes $y\le x$ and $y\le [K_{n+1}:K_n]^{-1}(x-i_n)+i_n$,
and has a single vertex at $(i_n,i_n)$.  As in \cite[1.4.1]{Wintenberger}, it follows that $\{i_n\}_{n\ge 1}$
is an increasing and unbounded sequence, and that one has
\begin{equation}
	b_n = i_1 + \frac{i_2-i_1}{[K_2:K_1]} + \frac{i_3-i_2}{[K_3:K_1]} + \cdots + 
	\frac{i_n-i_{n-1}}{[K_{n}:K_1]}.  \label{bformula}
\end{equation}
with $\{b_n\}_{n\ge 1}$ increasing and unbounded.  It follows easily from definitions
that the vertices of the function 
$\phi_{L/K}$ of Definition \ref{APFDEF} are $\{(i_n,b_n)\}_{n\ge 1}$, and the
slope of the segment immediately to the right of $(i_n,b_n)$ is $[K_{n+1}:K]^{-1}$.

We will make use of the following characterization:

\begin{proposition}\label{APF-char}
	Let $\{E_n\}_{n\ge 2}$ be a tower of finite extensions of $E_1 := K$ and let $L=\cup_{n\ge 1} E_n$
	be their rising union.  Set $\Phi_n:=\phi_{E_n/K}$ and define
	$\alpha_n:=\sup \{ x\ :\ \Phi_{n+1}(x)=\Phi_{n}(x)\}$.
	Then $L/K$ is APF if and only if the following two conditions hold:
	\begin{enumerate}
		\item We have $\lim_{n \rightarrow \infty} \alpha_n = \infty$. In particular, the pointwise limit $\Phi(x) := \lim_{n \rightarrow \infty} \Phi_n(x)$ exists, and moreover,
		for fixed $x_1$, we have $\Phi(x) = \Phi_n(x) $ for all $x \leq x_1$ and all $n$ sufficiently large. \label{converge}
		\item The function $\Phi(x)$ of $(\ref{converge})$ is piecewise linear and continuous,
	 with vertices $\{(i_n,b_n)\}_{n\ge 1}$ where $\{i_n\}$
		and $\{b_n\}$ increasing and unbounded sequences. \label{vertices increasing condition}
	\end{enumerate}
	If $L/K$ is APF, then $\Phi(x)=\phi_{L/K}$ for
	$\phi_{L/K}$ as in Definition $(\ref{APFDEF})$.
\end{proposition}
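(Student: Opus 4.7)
My plan is to exploit the transitivity identity
\begin{equation*}
\Phi_{n+1}=\Phi_n\circ \phi_{E_{n+1}/E_n}
\end{equation*}
from~\eqref{transitivity} together with the fact that each $\phi_{E_{n+1}/E_n}$ is a continuous, piecewise linear, concave bijection of $[0,\infty)$ with initial slope~$1$.  In particular $\phi_{E_{n+1}/E_n}(x)=x$ on an initial interval $[0,\beta_n]$, where $\beta_n$ is the first ramification break of $E_{n+1}/E_n$, and $\phi_{E_{n+1}/E_n}(x)<x$ strictly for $x>\beta_n$.  Since $\Phi_n$ is strictly increasing, this forces $\alpha_n=\beta_n$.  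Iterating transitivity gives $\phi_{E_m/E_n}=\phi_{E_{n+1}/E_n}\circ\cdots\circ\phi_{E_m/E_{m-1}}$, and peeling the composition from the inside out yields $\phi_{E_m/E_n}(x)=x$ if and only if $x\le \min_{n\le k<m}\alpha_k$.  Under~(1), for any $M>0$ one can choose $N$ with $\alpha_k>M$ for all $k\ge N$, whence $\Phi_m(x)=\Phi_n(x)$ for all $m\ge n\ge N$ and all $x\le M$.  This yields the pointwise convergence $\Phi_n\to\Phi$ and the local eventual stabilization asserted in~(1).

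For the forward direction, assume $L/K$ is APF, so that $\phi_{L/K}$ is defined by~\eqref{int} with vertices $\{(i_n,b_n)\}$ as in~\eqref{bformula}.  Since $L/E_n$ is also APF, transitivity extends to $\phi_{L/K}=\Phi_n\circ\phi_{L/E_n}$.  Because $\phi_{L/E_n}$ has initial slope~$1$ and its first break tends to infinity with $n$ (for any fixed $u$, once $n$ is large enough $G_K^uG_L\subseteq G_{E_n}$ and hence $\phi_{L/E_n}$ is the identity on $[0,u]$), $\Phi_n$ agrees with $\phi_{L/K}$ on an initial interval of length tending to infinity.  This forces $\alpha_n\to\infty$ and $\Phi=\phi_{L/K}$, and~(2) is a direct reading of the vertex structure of $\phi_{L/K}$ given by~\eqref{bformula}.

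For the converse, assume (1) and (2) and fix $u\ge 0$.  Choose $n$ with $\alpha_k>u$ for all $k\ge n$, so that $\Phi_m=\Phi_n$ on $[0,u]$ for every $m\ge n$.  I claim $G_K^uG_L=G_K^uG_{E_n}$, which is open since $E_n/K$ is finite.  The inclusion $\subseteq$ is automatic from $G_L\subseteq G_{E_n}$.  For the reverse, it suffices to show $G_K^uG_{E_n}=G_K^uG_{E_m}$ for every $m\ge n$, since intersecting over $m$ and using $G_L=\bigcap_m G_{E_m}$ closes the argument.  This equality is established by embedding $E_m$ in a finite Galois $F_m/K$ and applying Herbrand's theorem inside $\Gal(F_m/K)$: the hypothesis $\phi_{E_m/K}=\phi_{E_n/K}$ on $[0,u]$ translates, via the relation between the transition functions and the lower- and upper-numbering filtrations on $\Gal(F_m/K)$, into the corresponding group-theoretic equality.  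Once APF is established, the forward direction gives $\Phi=\phi_{L/K}$.  The main technical obstacle is the Herbrand-theoretic step in the converse: everything else is a direct manipulation of piecewise-linear functions, but identifying $G_K^uG_L$ with $G_K^uG_{E_n}$ requires careful bookkeeping of ramification filtrations inside a Galois closure.
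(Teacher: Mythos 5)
Your converse contains a genuine gap, and the clearest symptom is that it never uses Condition~(2); but (2) is essential, and the paper's own \S4 example (the tower built from $x^p+\pi_n^{s_n}x-\pi_n$ with $s_n=n$) satisfies Condition~(1) — there $E_{n+1}/E_n$ is elementary of level $i_n=pn/(p-1)$, so $\alpha_n=i_n\to\infty$ — and yet $L/K$ is not APF. The precise error is a coordinate mismatch in your Herbrand step: the upper-numbering parameter lives in the \emph{target} variable of $\phi_{E_m/K}$ (equivalently the domain variable of $\psi_{E_m/K}$), so the hypothesis $\Phi_m=\Phi_n$ on $[0,u]$ translates into $\psi_{E_m/K}=\psi_{E_n/K}$ on $[0,\Phi_n(u)]$, hence into $G_K^{v}G_{E_m}=G_K^{v}G_{E_n}$ only for $v\le\Phi_n(u)$, not for $v\le u$. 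Since $\Phi_n(u)\le u$ and $\sup_u\lim_n\Phi_n(u)=\sup_n b_n$, if $\{b_n\}$ is bounded you never control $G_K^{v}G_L$ for large $v$, and APF does not follow. This is exactly why the paper opens its converse with the unboundedness of $\{b_n\}$, which makes $\Phi^{-1}=\lim_n\psi_{E_n/K}$ defined on all of $[0,\infty)$, and then argues in the $\psi$-coordinate via the identity $\psi_{E_n/K}(u)=\int_0^u[G_K:G_K^{v}G_{E_n}]\,dv$ (the agreement of Definition~\ref{APFDEF} with the classical transition function for finite extensions), followed by the fixed-field step; your deferred ``Herbrand bookkeeping'' is precisely this citation, but it only works in the corrected coordinate, and your intersection step $\bigcap_m G_K^uG_{E_m}=G_K^uG_L$ also needs a (routine) compactness argument that you assert rather than give.

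The forward direction also has a faulty key step. The parenthetical inclusion is backwards: for fixed $u$, the fixed field of $G_K^uG_L$ is a \emph{finite} subextension of $L$ (by APF), hence contained in $E_n$ for all large $n$, which gives $G_{E_n}\subseteq G_K^uG_L$; the inclusion you assert, $G_K^uG_L\subseteq G_{E_n}$ for all large $n$, would upon intersecting over $n$ yield $G_K^u\subseteq G_L$ and force $L/K$ to be finite. Moreover, even the corrected inclusion does not give that $\phi_{L/E_n}$ is the identity on $[0,u]$: that statement says $i(L/E_n)\ge u$ and concerns the upper-numbering filtration over the base $E_n$, which is related to the $G_K$-filtration only through Herbrand functions; note also that, because $\phi_{L/K}=\Phi_n\circ\phi_{L/E_n}$, the assertion ``the first break of $\phi_{L/E_n}$ tends to infinity'' is essentially equivalent to the agreement statement you are trying to prove, so it cannot be waved in. The fact you need, $i(L/E_n)\to\infty$, is true, but it is the cited input of the paper's proof: Wintenberger's 1.4.1 gives $i(L/K_n)\to\infty$ along the elementary tower, and one transfers to an arbitrary exhausting tower via $K_{n_0}\subseteq E_m$ for $m$ large together with the monotonicity of $i(L/\cdot)$ in the base field. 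With those two repairs your argument becomes the paper's; as written, both directions break at their decisive steps.
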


\begin{proof}
Assume first that the two numbered conditions hold.  From the assumption that the $\{b_n\}$ sequence is unbounded, we know the inverse function $\Phi^{-1}(x)$ is defined for all $x \geq 0$ and is the pointwise limit of $\Phi_n^{-1}(x)$ (for any $x$, we have $\Phi^{-1}(x) = \Phi_n^{-1}(x)$ for all $n$ suitably large).  
By definition, $\Phi_n^{-1}(x) = \phi_{E_n/K}^{-1}(x) = \psi_{E_n/K}(x)$.  Thus, the convergence condition (and the definition of $\psi$) implies that for any $u$ we have 
$[G_K : G_K^u G_{E_n} ] = [G_K : G_K^u G_{E_{n+1}} ]$ for all $n$ suitably large.  Writing momentarily $K'$ for the fixed field of $G_K^u$ acting on $\Kbar$, it follows that $K' \cap E_n = K' \cap E_{n+1}$ for all $n$ sufficiently large.  Hence this intersection is also equal to $K' \cap L$ and so, for fixed $u$, 
we find $[G_K : G_K^u G_{E_n} ] = [G_K :  G_K^uG_L]$ for $n$ suitably large.  In particular,
$G_LG_K^u$ is of finite index---and hence open---in $G_K$ for every $u$,
and $L/K$ is APF.

Now assume $L/K$ is APF, and  let $\{K_n\}$ be the associated tower of elementary extensions as in Definition~\ref{elem subfield def}.  By \cite[1.4.1]{Wintenberger}, we have $\lim_{n \rightarrow \infty} i(L/K_n) = \infty$.  This implies that for any fixed $u$, 
there exists $n_0:=n_0(u)$ with
$[G_{K_n} : G_{K_n}^u G_{L} ] = 1$ and hence $\psi_{L/K_n}(u) = u$ for all $n \ge n_0$.
As $L=\cup E_m$, for any $u$ there exists $m_0=m_0(u)$ with $E_m\supseteq K_{n_0(u)}$
whenever $m \ge m_0(u)$.
We then have $\alpha_{m+1} \geq u$ for all 
$m\ge m_0(u)$; as $u$ was arbitrary, this implies (\ref{converge}).
It follows that $\Phi:=\lim_{n\rightarrow\infty} \Phi_n$ is well-defined, piecewise linear and continuous, and is the unique
such function with $\Phi'(u)=[G_K: G_K^uG_L]^{-1}$ whenever $u$ is not the $x$-coordinate of 
a vertex.  In particular, $\Phi=\phi_{L/K}$ for $\phi_{L/K}$
as in Definition $\ref{APFDEF}$; since $L/K$ is APF we conclude that (\ref{vertices increasing condition})
holds.
\end{proof}

\begin{corollary}[{\cite[1.4.2]{Wintenberger}}]\label{WintCor}
Set $E_1 := K$ and 
	for $n \geq 1$, assume that $E_{n+1}/E_n$ is elementary of level $i_n$
	with $\{i_n\}$ strictly increasing and unbounded, and let $\{b_n\}$
	be given by $(\ref{bformula})$.  Then $L:=\cup_n E_n$
	is an APF extension of $K$ if and only if $\{b_n\}$
	is unbounded. Moreover, if $L/K$ is APF, then $E_n$ is the $n$-th elementary subextension of
	 $L/K$ as in Definition~\ref{elem subfield def}.
\end{corollary}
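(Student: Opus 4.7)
The plan is to apply Proposition~\ref{APF-char} to the tower $\{E_n\}$. The key preliminary is to prove by induction on $n$ that $\Phi_n:=\phi_{E_n/K}$ is piecewise linear with vertices exactly $(i_1,b_1),\ldots,(i_{n-1},b_{n-1})$ and final slope $[E_n:K]^{-1}$ on $[i_{n-1},\infty)$. This follows from the transitivity relation~(\ref{transitivity}), $\Phi_{n+1}=\Phi_n\circ\phi_{E_{n+1}/E_n}$, combined with the hypothesis that $\phi_{E_{n+1}/E_n}$ has a single vertex at $(i_n,i_n)$ with right-slope $[E_{n+1}:E_n]^{-1}$; composing and using $i_{n-1}<i_n$ yields a single new vertex at $(i_n,\Phi_n(i_n))$, and the value $\Phi_n(i_n)$ expands into precisely the formula~(\ref{bformula}) for $b_n$.

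Because $\phi_{E_{n+1}/E_n}$ is the identity on $[0,i_n]$ and strictly contracting beyond, $\Phi_{n+1}$ agrees with $\Phi_n$ exactly on $[0,i_n]$, so $\alpha_n=i_n$. Since $\{i_n\}$ is unbounded, condition~(\ref{converge}) of Proposition~\ref{APF-char} holds automatically, and the limit $\Phi:=\lim_n\Phi_n$ is continuous and piecewise linear with vertices precisely $\{(i_n,b_n)\}_{n\ge 1}$. Condition~(\ref{vertices increasing condition}) is then equivalent to the unboundedness of $\{b_n\}$, establishing the biconditional and identifying $\Phi$ with $\phi_{L/K}$ when $L/K$ is APF.

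For the ``moreover'' assertion, I would invoke the identity $\psi_{E_n/K}'(u)=[G_K:G_K^uG_{E_n}]$ (used implicitly in the proof of Proposition~\ref{APF-char}): the vertex analysis above gives $[G_K:G_K^uG_{E_n}]=[E_n:K]$ for all $u>b_{n-1}$, so that $G_K^u\subseteq G_{E_n}$ for such $u$. Taking $u=b_n$ yields $G_{K_n}=G_K^{b_n}G_L\subseteq G_{E_n}$, hence $E_n\subseteq K_n$. Matching the slope of $\Phi=\phi_{L/K}$ immediately to the right of $(i_n,b_n)$---namely $[E_{n+1}:K]^{-1}$ from the first step and $[K_{n+1}:K]^{-1}$ from the discussion preceding Proposition~\ref{APF-char}---gives $[E_n:K]=[K_n:K]$, forcing equality. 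The main obstacle is the inductive vertex computation in the first step, which requires verifying that composing with $\phi_{E_{n+1}/E_n}$ introduces exactly one new vertex at $(i_n,b_n)$ and preserves the earlier ones; the remaining arguments are formal.
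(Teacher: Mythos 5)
Your argument is correct and is essentially the derivation the paper intends: the paper gives no proof of this corollary (it is attributed to Wintenberger 1.4.2 and placed as a direct consequence of Proposition~\ref{APF-char}), and your inductive computation of $\Phi_n$ via the transitivity relation---showing $\alpha_n=i_n$, so condition~(\ref{converge}) is automatic and $\Phi$ has vertices exactly $\{(i_n,b_n)\}$, reducing condition~(\ref{vertices increasing condition}) to unboundedness of $\{b_n\}$---is exactly that route, and your proof of the ``moreover'' clause via $G_K^u\subseteq G_{E_n}$ for $u>b_{n-1}$ plus slope matching is sound. The only nit is an index shift at the end: matching slopes to the right of $(i_n,b_n)$ gives $[E_{n+1}:K]=[K_{n+1}:K]$, which together with $E_{n+1}\subseteq K_{n+1}$ (and $E_1=K=K_1$) yields $E_n=K_n$ for all $n$.
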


\begin{definition}[{\cite[1.4.1]{Wintenberger}}]\label{strAPF}
	Let $L/K$ be an infinite APF extension with associated elementary tower $\{K_n\}$,
	and recall the function $i(\cdot)$ of $(\ref{idef}).$ We define 
	\begin{equation*}
		c(L/K):=\inf_{u \ge i(L/K)} \frac{\psi_{L/K}(u)}{[G_K:G_K^uG_L]}
		=\inf \frac{i_n}{[K_{n+1}:K]}
	\end{equation*}
	for $i_n:=i(K_{n+1}/K_n)=i(L/K_n)$. 
	We say that $L/K$ is {\em strictly APF}
	if $c(L/K)>0$. 
\end{definition}

\begin{remark} \label{slope remark}
	If $L/K$ is an infinite APF extension, it follows immediately from 
	Definition \ref{strAPF} and the discussion preceding Proposition \ref{APF-char}
	that the constant $c(L/K)$ is equal to $\inf v_n m_{n}$
	where $v_n$ is the $x$-coordinate of the $n$-th vertex of $\phi_{L/K}$
	and $m_n$ is the slope of the segment of $\phi_{L/K}$ immediately to the right
	of $v_n$.  Thus, $L/K$ is strictly APF if and only if the sequence $\{v_n m_n\}$
	is bounded below by a constant $c > 0$.
	More geometrically, the strictness condition is equivalent to 
	$ [G_K: G_K^uG_L]^{-1} \ge c/u $ for $u\ge i(L/K)$, which, upon integrating, is equivalent to
	the bound
	\begin{equation*}
		\phi_{L/K}(x) \ge c \log(x) + d\qquad\text{for}\qquad d:=i(L/K) - c\log(i(L/K))
	\end{equation*}
	for all $x\ge i(L/K)$.
\end{remark}

\begin{lemma} \label{comparison V and V_n}
Let $\{E_n\}_{n\ge 2}$ be a tower of finite extensions of $E_1 := K$
and $L:=\cup_n E_n$.  Suppose that $L/K$ is APF, and let 
$\Phi$ and $\Phi_n$ be the transition functions of Proposition \ref{APF-char}.
Let $V_n$ be the set of $x$-coordinates of vertices of $\Phi_n$,
and for $v\in V_n$ let $m_v$ be the slope of the segment of $\Phi_n$
immediately to the right of $v$.  Then
\[
c(L/K)\geq \liminf_{n \rightarrow \infty} \left(\min_{v \in V_n} v m_v \right).
\]
\end{lemma}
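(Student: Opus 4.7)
The plan is to compare each individual vertex of $\Phi = \phi_{L/K}$ with the vertices of the approximating $\Phi_n$, exploiting the local stabilization provided by Proposition~\ref{APF-char}(\ref{converge}).

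First, I invoke Remark~\ref{slope remark} to rewrite $c(L/K) = \inf_{v \in V_\infty} v\, m_v$, where $V_\infty$ is the set of $x$-coordinates of vertices of $\Phi$ and $m_v$ denotes the slope of $\Phi$ immediately to the right of $v$. Since the infimum over $v \in V_\infty$ commutes with the desired inequality, it suffices to show that each $v \in V_\infty$ satisfies
\[
v\, m_v \;\ge\; \liminf_{n \to \infty}\Bigl( \min_{w \in V_n} w\, m_w \Bigr).
\]

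To establish this, I fix $v \in V_\infty$ and pick any real $x_1 > v$. Proposition~\ref{APF-char}(\ref{converge}) then supplies an integer $N$ such that $\Phi_n = \Phi$ on $[0, x_1]$ whenever $n \ge N$. Since $v$ is an interior point of $[0, x_1]$, the two piecewise linear functions agree on an open neighborhood of $v$; this forces $v$ to be a vertex of $\Phi_n$ as well, with the same right-slope $m_v$. Thus $v \in V_n$ with corresponding slope equal to $m_v$ for every $n \ge N$, and so $v\, m_v \ge \min_{w \in V_n} w\, m_w$ for all such $n$. Taking the liminf in $n$ completes the argument.

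There is no genuine obstacle to this proof: it is essentially the observation that pointwise agreement of piecewise linear functions on a compact subinterval transfers both vertex positions and one-sided slopes from $\Phi$ to $\Phi_n$. The one subtlety is to choose $x_1$ \emph{strictly} greater than $v$, which ensures that $v$ lies in the interior of the interval of agreement, so that the right-slope of $\Phi$ at $v$ is genuinely detected by $\Phi_n$ for all $n$ large.
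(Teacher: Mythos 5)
Your proof is correct and follows essentially the same route as the paper: both use Remark~\ref{slope remark} to express $c(L/K)$ as an infimum over vertices of $\Phi$, and both use Proposition~\ref{APF-char}(\ref{converge}) to see that each vertex of $\Phi$, together with its right-slope, is eventually a vertex of $\Phi_n$, hence dominates $\min_{w\in V_n} w\,m_w$ for all large $n$. The only difference is presentational (you bound each $v\,m_v$ by the liminf directly, while the paper runs an $\epsilon$-argument), which is not a substantive distinction.
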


\begin{proof}
Writing $V$ for the set of $x$-coordinates of vertices of $\Phi$, we have
$c(L/K) = \inf_{v \in V} v m_v$ by Remark \ref{slope remark}.  This means that for any $\epsilon > 0$, we can find $v \in V$ such that $vm_v < c(L/K) + \epsilon$.  It follows from Proposition~\ref{APF-char}(\ref{converge})
that any vertex $v$ of $\Phi$ is a vertex of $\Phi_n$ for all $n$
sufficiently large, and the slopes of the segments on $\Phi$
and $\Phi_n$ to the immediate right of $v$ agree.
Thus 
$\min_{v \in V_n} v m_v < c(L/K) + \epsilon$ for all $n$ sufficiently large, which completes the proof.
\end{proof}

\section{Proof of Theorem~\ref{main theorem}}

From now until the end of Proposition~\ref{sufficient direction}, fix an infinite totally wildly ramified extension $L/K$ with a tower of subextensions $\{E_n\}$ satisfying Conditions~(\ref{bdd}) and (\ref{coef}) from Theorem~\ref{main theorem}.   We will show that such an extension $L/K$ is strictly APF, thus proving one direction of Theorem~\ref{main theorem}.  

\begin{lemma} \label{b valuations}
Let $f_n(x)$ and $\pi_n$ be as in Theorem~$\ref{main theorem}(\ref{coef})$.  Write 
\[
f_n(x) = x^{q_n} + a_{n,q_n-1}x^{q_n-1} + \cdots + a_{n,1} x + (-1)^p \pi_{n}, 
\]
so
\begin{equation} \label{coefficient notation}
	g_n(x):=f_n(x+\pi_{n+1}) = \sum_{i=1}^{q_n} b_{n,i} x^i,\quad\text{for}\quad
	b_{n,i}:=\sum_{j\ge i} a_{n,j} {{j}\choose{i}} \pi_{n+1}^{j-i}.
\end{equation}
Let $1 > \epsilon > 0$ be such that $v_K(a_{n,i}) > \epsilon$ for all $0 < i < q_n$.  If $0 < i < q_n$, then $v_K(b_{n,i}) > \epsilon.$ 
\end{lemma}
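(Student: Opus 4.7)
The plan is to split the sum $b_{n,i}=\sum_{j\ge i} a_{n,j}\binom{j}{i}\pi_{n+1}^{j-i}$ according to whether $j=q_n$ (the leading coefficient case) or $i\le j<q_n$ (the intermediate case), and then estimate the $v_K$-valuations of each piece separately before combining via the ultrametric inequality.

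For the intermediate range $i\le j<q_n$: since $i\ge 1$, each such $a_{n,j}$ is a non-leading and non-constant coefficient of $f_n$, so the hypothesis of the lemma gives $v_K(a_{n,j})>\epsilon$. The factor $\binom{j}{i}$ is a rational integer and hence has non-negative $v_K$-valuation, and $\pi_{n+1}^{j-i}$ is integral, so each of these contributions has $v_K$-valuation strictly greater than $\epsilon$.

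For $j=q_n$, the corresponding summand is $\binom{q_n}{i}\pi_{n+1}^{q_n-i}$ because $a_{n,q_n}=1$. Here I will use that $L/K$ is totally wildly ramified, so each intermediate extension $E_{n+1}/E_n$ inherits this property and $q_n$ is a power of $p$. An elementary calculation (Kummer/Lucas) then shows $p\mid\binom{q_n}{i}$ for every $0<i<q_n$, so
\[
v_K\bigl(\tbinom{q_n}{i}\pi_{n+1}^{q_n-i}\bigr)\ge v_K(p)\ge 1>\epsilon,
\]
the last inequality being exactly where the hypothesis $\epsilon<1$ is used.

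Combining the two cases, every term in the defining sum for $b_{n,i}$ has $v_K$-valuation $>\epsilon$, and the ultrametric inequality then yields $v_K(b_{n,i})>\epsilon$. The only mild obstacle is the $j=q_n$ term, where one must see that total wild ramification forces $q_n$ to be a $p$-power so that the divisibility $p\mid\binom{q_n}{i}$ kicks in; once that is noted the proof is essentially a term-by-term bookkeeping argument.
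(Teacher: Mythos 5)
Your argument is correct and is essentially the paper's own proof: split the sum defining $b_{n,i}$ into the terms with $j\neq q_n$, where the hypothesis $v_K(a_{n,j})>\epsilon$ applies, and the leading term $j=q_n$, where $p\mid\binom{q_n}{i}$ (since $q_n$ is a $p$-power by total wild ramification) gives valuation at least $v_K(p)\ge 1>\epsilon$, then conclude by the ultrametric inequality. Your explicit remark that $\epsilon<1$ and the $p$-power degree are exactly what make the $j=q_n$ term harmless matches the paper's (more terse) reasoning.
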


\begin{proof}
If $j \neq q_n$, then $v_K(a_{n,j}) > \epsilon$ by hypothesis and so $v_K\left(a_{n,j} {{j}\choose{i}} \pi_{n+1}^{j-i}\right) > \epsilon$.  If $j = q_n$ and $0 < i < q_n$, then 
$v_K{j \choose i} \geq v_K(p) \geq 1.$
\end{proof}

\begin{proposition} \label{it is APF}
The extension $L/K$ is APF.
\end{proposition}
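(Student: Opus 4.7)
My plan is to apply Proposition~\ref{APF-char}, for which I need to verify two conditions on the sequence $\Phi_n := \phi_{E_n/K}$. The key input is a Newton copolygon computation: Proposition~\ref{copoly and phi} gives
\[
\phi_{E_{n+1}/E_n}(x) = [E_n:K] \cdot \Psi_{g_n}\bigl(x/[E_{n+1}:K]\bigr),
\]
where $\Psi_{g_n}$ is the upper boundary of the Newton copolygon of $g_n$. Since $b_{n,q_n} = 1$ has $v_K$-valuation $0$, the copolygon begins at the origin with initial slope $q_n$, and Lemma~\ref{b valuations} together with Condition~(\ref{bdd}) (so $q := \sup_n q_n < \infty$) implies that the first vertex of $\Psi_{g_n}$ has $y$-coordinate at least $\epsilon/(q-1)$. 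Rescaling, $\phi_{E_{n+1}/E_n}$ equals the identity on $[0, \tilde\alpha_n]$ where $\tilde\alpha_n := [E_{n+1}:K]\, \epsilon/(q-1)$.

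Condition~(\ref{converge}) is then immediate: since $\Phi_n$ is strictly increasing, $\phi_{E_{n+1}/E_n}(x) = x$ is equivalent to $\Phi_{n+1}(x) = \Phi_n(x)$, so $\alpha_n \ge \tilde\alpha_n \to \infty$ because $[E_{n+1}:K] \to \infty$. The pointwise limit $\Phi := \lim \Phi_n$ thus exists, equals $\Phi_n$ on $[0, \tilde\alpha_n]$, and is piecewise linear and continuous on every bounded interval.

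The main obstacle is condition~(\ref{vertices increasing condition}), namely that $\Phi$ has infinitely many vertices $(i_n, b_n)$ with both $\{i_n\}$ and $\{b_n\}$ unbounded. For unboundedness of $\Phi$ itself---equivalent to $\{b_n\}$ unbounded---I would exploit the universal lower bound $\Phi_n'(t) \ge 1/[E_n:K]$ (since $\Phi_n'(t) = |G_t|/[E_n:K] \ge 1/[E_n:K]$) to obtain
\[
\Phi(\tilde\alpha_n) - \Phi(\tilde\alpha_{n-1}) = \Phi_n(\tilde\alpha_n) - \Phi_n(\tilde\alpha_{n-1}) \ge \frac{\tilde\alpha_n - \tilde\alpha_{n-1}}{[E_n:K]} = \frac{\epsilon(q_n - 1)}{q-1} \ge \frac{\epsilon(p-1)}{q-1},
\]
where the first equality uses $\Phi = \Phi_n$ on $[0, \tilde\alpha_n]$ together with $\Phi_n = \Phi_{n-1}$ on $[0, \tilde\alpha_{n-1}]$. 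Iterating yields $\Phi(\tilde\alpha_n) \to \infty$. For $\{i_n\}$ unbounded, I would use $\Phi \le \Phi_n$ with $\Phi_n(x)/x \to 1/[E_n:K] \to 0$ to see that $\Phi$ has asymptotic slope zero; combined with $\Phi$ being concave, piecewise linear, and unbounded, this forces infinitely many vertices whose $x$-coordinates tend to infinity. Proposition~\ref{APF-char} then concludes that $L/K$ is APF.
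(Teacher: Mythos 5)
Your proposal is correct and follows essentially the same route as the paper: invoke Proposition~\ref{APF-char}, use Proposition~\ref{copoly and phi} and Lemma~\ref{b valuations} to bound the first vertex of each $\phi_{E_{n+1}/E_n}$ away from zero, and then use the universal slope lower bound $\Phi_n' \ge 1/[E_n:K]$ to show $\Phi$ is unbounded. One small slip: where you write that the first vertex of $\Psi_{g_n}$ has \emph{$y$-coordinate} at least $\epsilon/(q-1)$, you mean \emph{$x$-coordinate} (the bound $x^* = \min_{0<i<q_n} v_K(b_{n,i})/(q_n-i) > \epsilon/(q_n-1) \ge \epsilon/(q-1)$ is what you use), and this is what your rescaled $\tilde\alpha_n = [E_{n+1}:K]\,\epsilon/(q-1)$ is consistent with; the paper uses the marginally cruder but cleaner constant $x_0 = \epsilon/q$. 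Your argument for unboundedness of the $\{i_n\}$ via concavity and asymptotic slope zero is a slightly softer phrasing than the paper's explicit count of at most $ns$ vertices in $\Phi_n$, but it is the same underlying point.
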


\begin{proof}
We prove this by verifying Conditions~(\ref{converge}) and~(\ref{vertices increasing condition}) of Proposition~\ref{APF-char}.  We begin with Condition~(\ref{converge}).  Because $\Phi_{n+1}(x) = \Phi_n(\phi_{E_{n+1}/E_n}(x))$, we know that $\Phi_{n+1}(x) = \Phi_n(x)$ for all $x \leq v$, where $v$ is the $x$-coordinate of the first vertex of $\phi_{E_{n+1}/E_n}(x)$.  Let $q := \max(q_n)$, let $\epsilon$ be as in Lemma~\ref{b valuations}, and set $x_0 := \frac{\epsilon}{q}$.  We claim that $v \geq e_{E_{n+1}/K} x_0$, which will complete the verification of Condition~(\ref{converge}).   By Proposition~\ref{copoly and phi}, it suffices to show that the first vertex of $\Psi_{g_n}(x)$ has $x$-coordinate at least $x_0$, where as usual $g_n(x) := f_n(x + \pi_{n+1})$ and $f_n(x)$ is the minimal polynomial of $\pi_{n+1}$ over $E_n$.  From Lemma~\ref{b valuations}, the only contribution to the Newton copolygon of $g_n(x)$ with $y$-intercept 0 occurs with slope 
$q_n$.  All other contributions to the Newton copolygon have positive slope and $y$-intercept at least $\epsilon$.  The line $y = q_n x$ crosses the line $y = \epsilon$ at $x = \epsilon/q_n \geq \epsilon/q$, as required.

We now verify that Condition~(\ref{vertices increasing condition}) of Proposition~\ref{APF-char} holds.  
We have seen that $\Phi(x) = \Phi_n(x) $ for all $x \leq e_{E_{n+1}/K} x_0$.  If $\max(q_n) = p^s$, then $\Phi_n(x)$ has at most $ns$ vertices and so $i_{ns+1} \geq e_{E_{n+1}/K} x_0$, and in particular, the sequence $\{i_n\}$ is unbounded.  It remains to check that the $\{b_n\}$ sequence is unbounded.  Because $\Phi(x)$ is monotone increasing, it suffices to show that $\lim_{x \rightarrow \infty} \Phi(x) = \infty$.  This will follow from the claim that for any $x \geq e_{E_{n+1}/K} x_0$, we have $\Phi(x) \geq q_1 x_0 + (q_2 - 1)x_0 + \cdots + (q_{n} - 1)x_0$.  To see this, notice that between $x = e_{E_i/K}x_0$ and $x = e_{E_{i+1}/K}x_0$, the slope of $\Phi(x)$ is at least $\frac{1}{e_{E_i/K}} = \frac{1}{q_1 \cdots q_{i-1}}$.  We then compute that for $x \geq e_{E_{n+1}/K} x_0$, we have
\begin{align*}
\Phi(x) \geq 1 \cdot q_1 x_0 + \frac{1}{q_1} (q_1 q_2 - q_1) x_0 + \cdots + \frac{1}{q_1 \cdots q_{n-1}} (q_1 \cdots q_{n} - q_1\cdots q_{n-1})x_0,
\end{align*}
which completes the proof.  
\end{proof}

\begin{proposition} \label{sufficient direction}
The extension $L/K$ is strictly APF.
\end{proposition}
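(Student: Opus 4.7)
The plan is to apply Lemma~\ref{comparison V and V_n}: since Proposition~\ref{it is APF} already establishes that $L/K$ is APF, it suffices to bound $\min_{v \in V_n} v m_v$ below by a single positive constant, uniformly in $n$. The crucial input is already implicit in the proof of Proposition~\ref{it is APF}: setting $q := \max_n q_n$ and $x_0 := \epsilon/q$, the first vertex of each $\phi_{E_{n+1}/E_n}$ has $x$-coordinate at least $e_{E_{n+1}/K}\,x_0$, so $\Phi_{n+1}$ agrees with $\Phi_n$ on $[0, e_{E_{n+1}/K}\,x_0]$. Iterating gives $\Phi_n = \Phi_k$ on $[0, e_{E_{k+1}/K}\,x_0]$ whenever $k \le n$; in particular, since each $\Phi_k$ has final slope $1/e_{E_k/K}$, its slope at any point is at least $1/e_{E_k/K}$.

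To bound $v m_v$ for an arbitrary vertex $v \in V_n$, I would partition $(0, \infty)$ into the intervals $I_k := [e_{E_k/K}\,x_0,\, e_{E_{k+1}/K}\,x_0]$ for $1 \le k \le n-1$ together with $I_n := [e_{E_n/K}\,x_0, \infty)$, and locate $v$ in the appropriate $I_k$. (Since $\Phi_n$ is the identity on $[0, e_{E_2/K}\,x_0] = [0, q_1 x_0]$, no vertex of $\Phi_n$ lies strictly below $I_2$, so the argument never needs to handle degenerate cases near the origin.) When $k < n$, the stabilization gives $\Phi_n = \Phi_k$ on a neighborhood of $v$, so $m_v$ is a slope of $\Phi_k$ and hence $m_v \ge 1/e_{E_k/K}$; when $k = n$, the same bound follows directly from the final slope of $\Phi_n$. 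Since also $v \ge e_{E_k/K}\,x_0$, we obtain $v m_v \ge x_0$ in every case, and Lemma~\ref{comparison V and V_n} then yields $c(L/K) \ge x_0 = \epsilon/q > 0$.

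The main conceptual obstacle is to recognize that the $v m_v$ product can actually \emph{decrease} when inherited vertices of $\Phi_n$ pass to $\Phi_{n+1}$ under the coordinate change by $\phi_{E_{n+1}/E_n}$; indeed, concavity of $\phi_{E_{n+1}/E_n}$ with $\phi(0)=0$ shows that an inherited vertex at $x$-coordinate $v$ moves to $\psi_{E_{n+1}/E_n}(v)$ with new slope-product bounded above by the old $vm_v$. Thus a naive induction on $n$ cannot close, and instead one must always compare each $\Phi_n$ to an \emph{earlier} $\Phi_k$, exploiting stabilization on $[0, e_{E_{k+1}/K}\,x_0]$ together with the final-slope lower bound of $\Phi_k$ to produce exactly the uniform estimate needed in each range of $x$-coordinates.
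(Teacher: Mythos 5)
Your argument is correct, and it is in substance the same as the paper's: the paper proves $\min_{v\in V_n} vm_v \ge x_0$ by induction on $n$, splitting the inductive step into the cases $v < e_{E_{n+1}/K}x_0$ (where $\Phi_{n+1}=\Phi_n$ near $v$, so the inductive hypothesis applies) and $v \ge e_{E_{n+1}/K}x_0$ (where the final slope bound $m_v \ge e_{E_{n+1}/K}^{-1}$ suffices); unrolling that recursion reproduces exactly your partition into the intervals $I_k$ and your comparison of $\Phi_n$ with the earlier $\Phi_k$.

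One remark on your final paragraph: your observation that $vm_v$ can genuinely decrease for a vertex inherited under the change of variable $\phi_{E_{n+1}/E_n}$ is correct, but the conclusion that ``a naive induction on $n$ cannot close'' misreads what saves the argument. The paper's induction is on the uniform inequality $\min_{v\in V_n}vm_v\ge x_0$, and it does close: any inherited vertex whose product drops must have migrated past $e_{E_{n+1}/K}x_0$, where the slope floor $e_{E_{n+1}/K}^{-1}$ caps the loss at exactly the threshold $x_0$. So the induction is salvaged not by abandoning it, but by the same two-regime case split that your direct argument also uses; the two proofs are the same proof in different packaging.
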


\begin{proof}
By Proposition~\ref{it is APF}, we know that $L/K$ is APF;  let $\Phi_n(x)$
and $\Phi(x)$
be the functions of Proposition~\ref{APF-char} and let
$V_n$ be the set of $x$-coordinates of vertices of $\Phi_n$. 
For $x_0 = \epsilon/q$ as in the proof of Proposition~\ref{it is APF}, we will prove that 
\begin{equation} \label{x-coord times slope bound}
\min_{v \in V_n} vm_v \geq x_0;
\end{equation}
it will then follow from Lemma~\ref{comparison V and V_n} that $L/K$ is strictly APF.

We will prove (\ref{x-coord times slope bound}) using induction on $n$.  
In the proof of Proposition~\ref{it is APF}, we showed that any $v \in V_2$ satisfies $v \geq q_1 x_0$;  on the other hand, the slopes of $\Phi_2(x)$ are all at least $1/q_1$.  This settles the base case $n=2$.
For the inductive step, let $v \in V_{n+1}$ and consider the following two cases:
\begin{enumerate}
\item Assume $v < e_{E_{n+1}/K} x_0$.  In this range, $\Phi_{n+1}(x) = \Phi_n(x)$ and we are finished by the inductive hypothesis.  
\item Assume $v \geq e_{E_{n+1}/K} x_0$.  Then $vm_v \geq e_{E_{n+1}/K} x_0 m_v \geq e_{E_{n+1}/K} x_0\cdot e_{E_{n+1}/K}^{-1} = x_0.$
\end{enumerate}
\end{proof}

Proposition~\ref{sufficient direction} concludes the proof that $L/K$ is strictly APF, giving
one direction of Theorem~\ref{main theorem}.  The remainder of this section is devoted to proving the converse.

We now fix an infinite and totally wildly ramified strictly APF extension $L/K$,
and let $\{K_n\}_{n\ge 1}$ be the associated tower of elementary extensions as in
Definition~\ref{elem subfield def}, so that $K_1 = K$ and $K_{n+1}/K_{n}$ is elementary of level $i_n$;
we set $q_n:=[K_{n+1}:K_n]$, so that $[K_{n+1}:K]=q_1q_2\cdots q_n$.
Let $\pi_n\in K_{n}$ be any choice of a norm-compatible family of uniformizers.\footnote{Such a choice
exists as $L/K$ is (strictly) APF.  Indeed, the norm field of $L/K$ is by definition 
$X_K(L):=\varprojlim_{E\in \E_{L/K}} E$, where $\E_{L/K}$ is the collection of finite extensions
of $K$ in $L$ and the limit is taken with respect to the Norm mappings.
For any nonzero $(\alpha_E)_E\in X_K(L)$, one defines $v(\alpha):=v_K(\alpha_K)$.
By \cite[2.2.4, 2.3.1]{Wintenberger}, one knows that $(X_K(L),v)$ is a complete, discretely
valued field with residue field $k$, and any choice of uniformizer
in $X_K(L)$ corresponds to a norm compatible sequence $(\pi_E)_{E}$ with
$\pi_E$ a uniformizer of $E$. 
}

\begin{proposition} \label{nec coef}
Let 
\[
f_n(x) = x^{q_n} + a_{n,q_n-1}x^{q_n-1} + \cdots + a_{n,1} x + (-1)^p \pi_{n}
\] 
denote the minimal polynomial of $\pi_{n+1}$ over $K_{n}$. Then the valuations of the coefficients $v_K(a_{n,i})$ for $0 < i < q_n$ are bounded below by a positive constant {\rm(}independent of $n$ and $i${\rm)}.  
\end{proposition}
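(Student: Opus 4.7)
The plan is to transfer lower bounds on the coefficients of $g_n(x) := f_n(x+\pi_{n+1})$, which come from the Newton copolygon description in Proposition~\ref{copoly and phi}, to lower bounds on the coefficients $a_{n,i}$ of $f_n$ by inverting the shift formula from Lemma~\ref{b valuations}. The strict APF hypothesis will enter through the bound $i_n/[K_{n+1}:K]\ge c(L/K)>0$ coming from Definition~\ref{strAPF}.

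I first exploit the elementary structure of $K_{n+1}/K_n$. By construction of the elementary tower (see the discussion after Definition~\ref{elem subfield def}), $\phi_{K_{n+1}/K_n}$ has a single vertex at $(i_n,i_n)$ with slopes $1$ and $1/q_n$. Via Proposition~\ref{copoly and phi}, this translates to $\Psi_{g_n}(x) = \min_{i\ge 1}(ix + v_K(b_{n,i}))$ having a single vertex at $(i_n/e_{K_{n+1}/K},\, i_n/e_{K_n/K})$ with slopes $q_n$ and $1$. The slope-$q_n$ branch is realized by the monic leading term $y = q_n x$ (as $v_K(b_{n,q_n})=0$) and the slope-$1$ branch by $y = x + v_K(b_{n,1})$. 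For any intermediate $1 < i < q_n$ with $b_{n,i}\neq 0$, the line $y = ix + v_K(b_{n,i})$ must lie on or above $\Psi_{g_n}$; evaluating at the vertex gives
\[
v_K(b_{n,i})\ \ge\ \frac{(q_n - i)\,i_n}{e_{K_{n+1}/K}} \qquad \text{for all } 1 \le i < q_n.
\]

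Next, I invert the Taylor expansion of Lemma~\ref{b valuations}: since $f_n(y) = g_n(y - \pi_{n+1})$, we have $a_{n,i} = \sum_{j=i}^{q_n} b_{n,j}\binom{j}{i}(-\pi_{n+1})^{j-i}$, and by the ultrametric inequality it suffices to bound each summand. For the $j = q_n$ term, since $q_n$ is a power of $p$ and $0 < i < q_n$, we have $v_K\binom{q_n}{i}\ge 1$, so this term has valuation $\ge 1 + (q_n-i)/e_{K_{n+1}/K}>1$. For $i\le j < q_n$, the bound above on $v_K(b_{n,j})$ combined with $v_K(\pi_{n+1}^{j-i}) = (j-i)/e_{K_{n+1}/K}$ gives valuation at least $[(q_n-j)i_n + (j-i)]/e_{K_{n+1}/K}\ge i_n/e_{K_{n+1}/K}$, since $q_n - j \ge 1$. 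By strictness of $L/K$, this last quantity is $\ge c(L/K)>0$, so
\[
v_K(a_{n,i})\ \ge\ \min\bigl(1,\,c(L/K)\bigr) > 0,
\]
uniformly in $n$ and $i$, which is what we need (take $\epsilon$ to be any positive number strictly less than this minimum).

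The main obstacle is the copolygon argument for intermediate indices: the key insight is that the single-vertex structure of $\Psi_{g_n}$ forces every line $y = ix + v_K(b_{n,i})$ with $1<i<q_n$ to pass on or above the vertex, which yields a linear-in-$(q_n - i)$ bound whose minimum over $1\le i < q_n$ is controlled by $i_n/e_{K_{n+1}/K}$; only at this stage does the strict APF hypothesis enter, converting this into a bound independent of $n$.
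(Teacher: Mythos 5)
Your argument is correct, and it takes a genuinely different (direct) route from the paper's, which proceeds by contradiction. The paper supposes some $v_K(a_{n,i})<c:=c(L/K)$, transfers this to the coefficient $b_{n,i}$ of $g_n(x)=f_n(x+\pi_{n+1})$ by observing that the nonzero terms of $b_{n,i}=\sum_{j\ge i}a_{n,j}\binom{j}{i}\pi_{n+1}^{j-i}$ have pairwise distinct valuations (so $v_K(b_{n,i})\le v_K(a_{n,i})<c$), and then contradicts the fact that the Newton polygon of $g_n$ is a single segment of slope $-i_n/(q_1\cdots q_n)\le -c$. You instead extract from the single-vertex structure of $\Psi_{g_n}$ (whose vertex, via Proposition~\ref{copoly and phi} and the elementary structure of $K_{n+1}/K_n$, sits at $x_v=i_n/e_{K_{n+1}/K}$ with $\Psi_{g_n}(x_v)=q_nx_v$) the lower bound $v_K(b_{n,j})\ge (q_n-j)\,i_n/e_{K_{n+1}/K}$, valid because every line $y=jx+v_K(b_{n,j})$ lies on or above the copolygon boundary, and then push this back to $a_{n,i}=\sum_{j\ge i}b_{n,j}\binom{j}{i}(-\pi_{n+1})^{j-i}$ using the ultrametric inequality, handling the monic $j=q_n$ term with $p\mid\binom{q_n}{i}$ --- exactly the wildness input of the paper's Lemma~\ref{b valuations}, run in the opposite direction. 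What your route buys: no contradiction, no need for the distinct-valuations observation, and an explicit uniform bound $v_K(a_{n,i})\ge\min\bigl(1,c(L/K)\bigr)$; it also still works in equal characteristic, where the top term simply vanishes. What the paper's route buys: the distinct-valuation step identifies $v_K(b_{n,i})$ exactly as the minimum of the term valuations, making the Newton-polygon slope comparison immediate and yielding the slightly cleaner threshold $c(L/K)$ itself rather than $\min(1,c(L/K))$. Both arguments rest on the same two pillars (the elementary, single-break structure of $K_{n+1}/K_n$ translated through Proposition~\ref{copoly and phi}, and the strictness bound $i_n/[K_{n+1}:K]\ge c(L/K)$ from Definition~\ref{strAPF}), so the difference is in the transfer mechanism between the coefficients of $f_n$ and of $g_n$, not in the underlying ramification-theoretic input.
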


\begin{proof}
We prove this by contradiction.  As $L/K$ is strictly APF, there exists $c > 0$ such that
\begin{equation} \label{str apf eqn necessary}
\inf_n \frac{i_n}{q_1 \cdots q_n} \geq c.
\end{equation}
Suppose that 
\begin{equation}
v_K(a_{n,i}) < c \label{bdd vals eqn necessary}
\end{equation}
for some $n$ and $i$.
From (\ref{str apf eqn necessary}) and (\ref{bdd vals eqn necessary}) we will reach a contradiction.

Because $K_{n+1}/K_{n}$ is elementary, from the discussion following Definition~\ref{elem subfield def} we know that the transition function $\phi_{K_{n+1}/K_{n}}(x)$ has a unique vertex $(i_n, i_n)$.  By Proposition~\ref{copoly and phi}, this means that for $g_n(x):=f_n(x+\pi_{n+1})$, the copolygon boundary function
$\Psi_{g_n}(x)$ has a unique vertex with $x$-coordinate 
$i_n/(q_1 \cdots q_n)$.  By the correspondence between Newton polygons and copolygons (see for example \cite[\S1]{Lubin}), we know that the Newton polygon of $g_n$ has exactly one segment of slope
\begin{equation} \label{Newton slope necessary}
\frac{-i_n}{q_1 \cdots q_n} \leq -c,
\end{equation}
where the inequality follows from (\ref{str apf eqn necessary}).  On the other hand, 
writing $g_n(x) = \sum_{j \geq 1} b_{n,i} x^i$ we have 
\begin{equation} \label{coefficient notation 2}
   v_K(b_{n,i}) = v_K\left(\sum_{j\ge i} a_{n,j} \binom{j}{i}\pi_{n+1}^{j-i}\right)
   = \min_{j \geq i} v_K\left( a_{n,j} \binom{j}{i}\pi_{n+1}^{j-i}\right)
\end{equation}
as the valuations of the nonzero terms in the sum are all distinct: in fact, they are all distinct modulo 
$1/(q_1 \cdots q_{n-1})$.  Now, using (\ref{bdd vals eqn necessary}), we have $v_K(b_{n,i}) \leq v_K \left(a_{n,i} \binom{i}{i} \pi_{n+1}^0 \right)< c$.    

We now compute the Newton polygon associated to $g_n$. It must pass through the point $(q_n,0)$ and by the discussion in the previous paragraph, it must pass below the point $(i,c)$.  Such a Newton polygon has slope strictly greater than (i.e., negative and smaller in absolute value than) $\frac{-c}{q_n - i} \geq -c$.   This contradicts (\ref{Newton slope necessary}).  
\end{proof}

\begin{proposition} \label{necessary bounded degrees}
With notation as in Proposition~$\ref{nec coef}$, the degrees $q_n$ are bounded above.
\end{proposition}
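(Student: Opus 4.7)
The plan is to argue by contradiction: assume $\{q_n\}$ is unbounded and derive a contradiction from strict APF once $q_n$ is large. Set $b_{n,1} := f_n'(\pi_{n+1})$, which is the coefficient of $x$ in $g_n(x) := f_n(x + \pi_{n+1})$. As in the proof of Proposition~\ref{nec coef}, the elementary property of $K_{n+1}/K_n$ forces the Newton polygon of $g_n$ to be a single segment from $(1, v_K(b_{n,1}))$ to $(q_n, 0)$; via Proposition~\ref{copoly and phi}, its slope $-v_K(b_{n,1})/(q_n - 1)$ equals $-i_n/(q_1 \cdots q_n)$.  Strict APF then yields the lower bound
\[
v_K(b_{n,1}) \;=\; \frac{i_n(q_n - 1)}{q_1 \cdots q_n} \;\ge\; c(q_n - 1), \qquad c := c(L/K) > 0.
\]

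Next I will decompose $b_{n,1} = A + B$ with
\[
A := q_n\, \pi_{n+1}^{q_n - 1}, \qquad B := \sum_{j=1}^{q_n - 1} j\, a_{n,j}\, \pi_{n+1}^{j-1},
\]
and show that $v_K(A) \ne v_K(B)$; this forces $v_K(b_{n,1}) = \min(v_K(A), v_K(B))$, and in particular $v_K(A) \ge c(q_n - 1)$.  The heart of the argument is a fractional-parts computation: since $a_{n,j} \in K_n$, we have $v_K(j\, a_{n,j}) \in \tfrac{1}{q_1 \cdots q_{n-1}}\Z$, so multiplying valuations by $q_1 \cdots q_n$, the scaled valuation of the $j$-th summand of $B$ reduces to $j - 1 \pmod{q_n}$, while the scaled valuation of $A$ reduces to $q_n - 1 \pmod{q_n}$.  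For $1 \le j \le q_n - 1$, the residues $0, 1, \ldots, q_n - 2$ are pairwise distinct and all different from $q_n - 1$, so the summands of $B$ have pairwise distinct valuations (whence $v_K(B)$ equals their minimum) and $v_K(A) \ne v_K(B)$, as desired.

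Combining the two bounds yields
\[
v_p(q_n)\, e_{K/\Q_p} + \frac{q_n - 1}{q_1 \cdots q_n} \;=\; v_K(A) \;\ge\; c(q_n - 1).
\]
Since $L/K$ is totally wildly ramified, $q_n = p^{v_p(q_n)}$, so the left-hand side grows only logarithmically in $q_n$; this contradicts the linear lower bound once $n$ is large enough that $1/(q_1 \cdots q_n) < c$ and $q_n$ is sufficiently large.  The resulting uniform bound on $\{q_n\}_{n \ge n_0}$ absorbs the finitely many preceding degrees.  I expect the main obstacle to be the fractional-parts observation separating $v_K(A)$ from $v_K(B)$; once this is in hand, everything reduces to the comparison of $\log q_n$ against $c\, q_n$.
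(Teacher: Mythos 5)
Your proposal is correct and follows essentially the same route as the paper: use the elementary property of $K_{n+1}/K_n$ together with strict APF to obtain the lower bound $v_K(b_{n,1}) \geq c(q_n-1)$, then observe that $v_K(b_{n,1}) \leq v_K(q_n\pi_{n+1}^{q_n-1})$, which forces $c(q_n-1)$ to be bounded above by $v_K(p)\log_p(q_n)$ plus a term at most $1$, hence $q_n$ is bounded. Your decomposition $b_{n,1} = A + B$ together with the fractional-parts computation is simply an unwinding of the paper's one-line observation (after its display (\ref{coefficient notation 2})) that the nonzero terms of the sum have pairwise distinct valuations modulo $1/(q_1\cdots q_{n-1})$, so the ultrametric inequality is an equality; likewise, your "contradiction for $n$ large" framing is a direct bound in disguise, since $(q_n-1)/(q_1\cdots q_n) < 1$ for all $n$.
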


\begin{proof}
The proof is similar to the proof of Proposition~\ref{nec coef}.  As $L/K$ is strictly APF, we can find a positive constant $c$ such that for all $n$,
\[
\frac{i_n}{q_1 \cdots q_n} \geq c.
\]
Since $K_{n+1}/K_n$ is elementary, the Newton polygon of $f_n(x+\pi_{n+1})$ consists of a single segment with slope 
having absolute value greater than or equal to $c$.  In the notation of (\ref{coefficient notation 2}), this implies that 
\begin{equation}
c \leq \frac{v_K(b_{n,1})}{q_n-1} = \frac{v_K\left( \sum_{j\ge 1} a_{n,j} \binom{j}{1}\pi_{n+1}^{j-1} \right)}{q_n-1} \leq \frac{v_K\left( q_n\pi_{n+1}^{q_n-1} \right)}{q_n-1} 
= \frac{v_K(p) \cdot \log_p(q_n) + \frac{q_n-1}{q_1 \cdots q_n}}{q_n-1}. \label{bound on first coef}
\end{equation}
This implies $\{q_n\}_{n\ge 1}$ is bounded.
\end{proof}

\begin{remark}
Notice that in the equicharacteristic case, the term $v_K(p)$ appearing in (\ref{bound on first coef}) is $v_K(0)$, and so our argument fails.  See also Example~\ref{char p example}.  
\end{remark}

\begin{proof}[Proof of Theorem~$\ref{main theorem}$]
The content of Theorem~\ref{main theorem} is that, in order for $L/K$ to be strictly APF, it is necessary and sufficient that there exist a tower of subfields satisfying Conditions~(\ref{bdd}) and (\ref{coef}).  That an infinite totally wildly ramified extension containing such a tower of subextensions is strictly APF follows from Proposition~\ref{sufficient direction}.  That the tower of elementary subextensions of a strictly APF extension, together with any norm compatible family of uniformizers, satisfies Conditions~(\ref{bdd}) and (\ref{coef}) follows from Proposition~\ref{nec coef} and Proposition~\ref{necessary bounded degrees}.  
\end{proof}

\section{Examples} \label{examples section}

We conclude with examples which illustrate the subtlety of the APF and strictly APF
conditions.

\begin{example} \label{increasing degree example}
	Fix a sequence of positive integers $\{r_n\}_{n\ge 1}$
	and set $q_n:=p^{r_n}$.  Let $K$ be a finite extension of $\Q_p$,
	choose a uniformizer $\pi_1$ of $K$, and for $n\ge 1$ recursively choose 
	a root $\pi_{n+1}$ of $f_n(x) := x^{q_n}+\pi_1 x + (-1)^p \pi_{n}=0$.  
	Set $E_1:=K$ and for $n\ge 2$ let $E_{n+1}:=E_{n}(\pi_{n+1})$ and put $L=\cup_{n\ge 1}E_n$.

We first claim that $E_{n+1}/E_n$ is elementary of level $i_n = q_1 q_2 \cdots q_{n}/(q_n - 1)$.  As in the proof of Proposition~\ref{nec coef}, we would like to show that the Herbrand transition function 
$\phi_{E_{n+1}/E_n}(x)$ has exactly two segments: a segment of slope $1$ from $x = 0$ to $x = i_n$, and a segment of slope $1/q_n$ for $x > i_n$.  Equivalently, it suffices to show that the Newton polygon of $f_n(x + \pi_{n+1})$ has exactly one segment of slope $-i_n/e_{E_{n+1}/K}$.   (As always, we use the $v_K$ valuation for drawing Newton polygons.)

Using that $q_n$ is a power of $p$, the binomial theorem shows that the Newton polygon of 
$f_n(x + \pi_{n+1})$ is the lower convex hull of the collection of vertices containing $(1,1)$, $(q_n, 0)$, and other vertices with $y$-coordinate at least $1$.  Hence the Newton polygon consists of a single segment of slope $-1/(q_n - 1)$.  Thus $i_n = q_1 q_2 \cdots q_{n}/(q_n - 1)$, as desired.  

Notice that the $\{i_n\}_n$ is strictly increasing.  We may thus use 
Corollary~\ref{WintCor} to analyze the extension $L/K$.  Define $b_n$ as in (\ref{bformula}).  
Substituting $i_n = q_1 q_2 \cdots q_{n}/(q_n - 1)$ into the definition of the terms $b_n$, we find
\[
b_n = \frac{q_1}{q_1-1} +\sum_{k=2}^n \left( \frac{q_k}{q_k-1} - \frac{1}{q_{k-1}-1} \right),
\]
and it follows from Corollary~\ref{WintCor} that $L/K$ is APF for every choice of $q_n$ (i.e., for every choice of $r_n$).  On the other hand, by Definition~\ref{strAPF}, $L/K$ is strictly APF if and only if 
\[
\inf_{n>0}  \frac{i_n}{[E_{n+1}:K]}
		 = \inf_{n>0} \frac{1}{(q_n-1)} > 0.
\]
In other words, the extension $L/K$ is strictly APF if and only if the degrees $q_n$ are bounded above.  
\end{example}

\begin{example}
	Fix an increasing sequence $\{s_n\}_{n\ge 1}$ of positive integers
	and let $K$ be a finite extension of $\Q_p$ with absolute ramification 
	index $e$.  Choose a uniformizer $\pi_1$ of $K$, set $E_1:=K$ and for $n\ge 2$
	recursively choose $\pi_{n+1}$ a root of $x^p + \pi_{n}^{s_n}x-\pi_{n}=0$
	and put $E_{n+1}:=E_{n}(\pi_{n+1})$.
	Set $L=\cup_{n\ge 1}E_n$.
	
	As in Example~\ref{increasing degree example}, if we assume that $s_n \leq p^{n-1}e$,
	we compute that $E_{n+1}/E_n$ is elementary of level $i_n= ps_n/(p-1)$, and because we have chosen $s_n$ to be an increasing sequence, we may again apply Corollary \ref{WintCor}.
With $b_n$ as in (\ref{bformula}), we compute 
	\begin{equation*}
		b_n =\frac{ps_1}{p-1} + \frac{p}{p-1}\sum_{k=2}^n \frac{s_k-s_{k-1}}{p^{k-1}}.
	\end{equation*}
As the following examples illustrate, whether or not the extension $L/K$ is APF, strictly APF, or neither, depends crucially on the choice of $s_n$:
\begin{enumerate}
\item If one takes $s_n = n$, then the $b_n$ terms are increasing but bounded.  In this case, the extension $L/K$ is not APF.
\item Assume $p \geq 5$ and take $s_n=\lfloor p^{n-1}/n\rfloor$.  Then 
$\{i_n\}_{n\ge 1}$ is strictly increasing (using the hypothesis $p\ge 5$).
Moreover, the sequence $\{b_n\}_{n\ge 1}$ is increasing and unbounded and so $L/K$ is APF, but 
\[
\inf_{n > 0}  \frac{i_n}{[E_{n+1}:K]} = \inf_{n>0}\frac{s_n}{p^{n-1}(p-1)} = 0,
\]
and so $L/K$ is APF but not strictly APF.
\item If we take $s_n = p^{n-1}$, then $\{b_n\}_{n\ge 1}$ is increasing and unbounded, and 
\[
\inf_{n > 0} \frac{s_n}{p^{n-1}(p-1)} = \frac{1}{p-1} > 0,
\]
so $L/K$ is strictly APF.
\end{enumerate}
\end{example}

\begin{remark} \begin{enumerate}
\item Assume $L/K$ is an infinite totally wildly ramified strictly APF extension.  One cannot expect that Condition~(\ref{bdd}) of Theorem~\ref{main theorem} hold for \emph{every} tower of subextensions $\{E_n\}$.  For example, 
for $K$ a finite extension of $\Q_p$ and $\pi_1$ a uniformizer of $K$, 
consider the extension $L/K$ formed by recursively extracting roots of the polynomials $f_n(x) = x^{p^n} - \pi_{n}$.  These polynomials determine the same extension as the polynomials $f_n(x) = x^p - \pi_{n}$; however the former collection of polynomials has unbounded degrees, while the degrees in the latter collection are all equal to $p$.  
\item The authors do not know whether Condition~(\ref{coef}) of Theorem~\ref{main theorem} holds for \emph{every} tower of subextensions and every norm-compatible choice of uniformizers.  
\end{enumerate}
\end{remark}

\begin{example} \label{char p example}
Here we give an example to show that the full strength of our theorem does not hold in characteristic~$p$; see Remark~\ref{char p remark} for positive results.  
Assume $K$ is a local field of characteristic~$p$, and let $\pi_1 \in K$ denote a uniformizer.  Consider the polynomials
\[
f_n(x) = x^{p^n} + \pi_1^{p^n} x -\pi_{n},  
\]
and let $\pi_{n+1}$ denote a root of $f_n(x)$.  Set $E_{n+1} := E_n(\pi_{n+1})$ and $L := \cup E_n$.   We claim that $L/K$ is strictly APF, and that $\{E_n\}$ is the associated tower of elementary extensions.  Because the degrees $\deg f_n = p^n$ are unbounded, this shows that Theorem~\ref{main theorem} is not true for local fields of characteristic~$p$.  

We compute $f_n(x + \pi_{n+1}) = x^{p^n} + \pi_1^{p^n}x$ and so the Newton polygon is a single segment with slope
\[
\frac{-p^n}{p^n - 1} = \frac{-i_n}{p \cdot p^2  \cdots  p^{n}},  
\]
which implies
\[
i_n = \frac{p \cdot p^2  \cdots  p^n \cdot p^n}{p^n - 1}. 
\]
This is a strictly increasing sequence, so we can apply Corollary \ref{WintCor} as above.  One checks that the sequence $\{b_n\}$ defined by (\ref{bformula}) is increasing and unbounded 
and
\[
\inf \frac{i_n}{[E_{n+1} : E_1]} > 0.
\]
Corollary~\ref{WintCor} then shows that $L/K$ is strictly APF, as desired.  
\end{example}

\begin{remark} Theorem~\ref{main theorem} is perhaps better suited to producing strictly APF extensions 
than to establishing whether a given extension $L/K$ is strictly APF.  For example, consider the extension $\Q_p(\mu_{p^{\infty}}, p^{1/p^{\infty}})/\Q_p$.  This is a Galois extension with Galois group a $p$-adic Lie group, hence is strictly APF extension by Sen's theorem \cite[\S4]{Sen72}.   However, the authors do not know how to verify this 
fact using Theorem~\ref{main theorem}, because we do not know how to select a tower
$\{E_n\}_{n\ge 1}$ and a norm compatible family of uniformizers $\{\pi_n\}_{n\ge 1}$
which is amenable to explicitly computing
the polynomials $f_n$ as in the statement of Theorem \ref{main theorem}. 
\end{remark}

\subsection*{Acknowledgments} 
The first author is supported by an NSA ``Young Investigator" grant (H98230-12-1-0238).
The second author is partially supported by the Danish National Research Foundation through the Centre for Symmetry and Deformation (DNRF92).  The first two authors largely worked on this project together while visiting Lars Hesselholt at Nagoya University and Takeshi Saito at the University of Tokyo; special thanks to them for their support and useful discussions.

\bibliography{canonical}
\bibliographystyle{plain}

\end{document}